\newtheorem{theorem}{Theorem}[section]
\newtheorem{lemma}[theorem]{Lemma}
\newtheorem{proposition}[theorem]{Proposition}
\newtheorem{corollary}[theorem]{Corollary}
\newtheorem{fact}[theorem]{Fact}
\theoremstyle{definition}
\newtheorem{definition}[theorem]{Definition}
\newtheorem{remark}[theorem]{Remark}
\newtheorem{question}[theorem]{Question}
\newtheorem{claim}{Claim}
\def\N{\mathbb{N}}
\def\T{\mathbb{T}}
\def\grp#1{\langle{#1}\rangle}
\def\dual#1{{{#1}^\wedge}}
\def\seconddual#1{{#1}^{\wedge\wedge}}
\def\cont{\mathfrak{c}}
\begin{document}

\title{The impact of the Bohr topology on selective pseudocompactness} 

\author[D. Shakhmatov]{Dmitri Shakhmatov}
\address{Division of Mathematics, Physics and Earth Sciences\\
Graduate School of Science and Engineering\\
Ehime University, Matsuyama 790-8577, Japan}
\email{dmitri.shakhmatov@ehime-u.ac.jp}
\thanks{The first listed author was partially supported by the Grant-in-Aid for Scientific Research~(C) No.~26400091 of the Japan Society for the Promotion of Science (JSPS)}

\author[V. Ya\~nez]{V\'{\i}ctor Hugo Ya\~nez}
\address{Master's Course, Graduate School of Science and Engineering\\
Ehime University, 
Matsuyama 790-8577, Japan}
\email{victor\textunderscore yanez@comunidad.unam.mx}
\thanks{This paper was written as part of the second listed author's Master's Program at the Graduate School of Science and Engineering of Ehime University. The second listed author was partially supported by the 2016-2017 fiscal year grant of the Matsuyama Saibikai.}

\begin{abstract}
Recall that a space $X$ is {\em selectively pseudocompact\/} if for every sequence $\{U_n: n \in \N\}$ of non-empty subsets of $X$ one can choose a point $x_n \in U_n$ for all $n \in \N$ such that the resulting sequence $\{x_n: n \in \N\}$ has 
an accumulation point in $X$.
This notion
was introduced under the name strong pseudocompactness by Garc\'{\i}a-Ferreira and Ortiz-Castillo;
the present name is due to Dorantes-Aldama and the first author.
In 2015, Garc\'{\i}a-Ferreira and Tomita 
constructed
a pseudocompact Boolean group that is not selectively pseudocompact. 
We prove that if the subgroup topology on every 
countable subgroup $H$ of an infinite Boolean topological group $G$ is finer than its maximal precompact topology (the so-called Bohr topology of $H$), then $G$ is not selectively pseudocompact,
and from this result
we deduce that many known examples in the literature of pseudocompact Boolean groups {\em automatically\/} fail to be selectively pseudocompact.
We also show that,
under the Singular Cardinal Hypothesis, {\em every\/} infinite pseudocompact Boolean group admits a pseudocompact reflexive group topology which is not selectively pseudocompact. 
\end{abstract}

\maketitle

As usual, $\N$ denotes the set of natural numbers and $\cont$ denotes the cardinality of the continuum.

A group $G$ is {\em Boolean\/} if $x^2=e$ for every $x\in G$, where $e$ is the identity element of $G$. It is known (and easy to see) that all Boolean groups are abelian.

For a subset $X$ of a group $G$, the symbol $\grp{X}$ denotes the subgroup of $G$ generated  by $X$; that is, the smallest subgroup of $G$ containing $X$.

A topological group $G$ is {\em precompact\/}, or {\em totally bounded\/}, provided that for every 
open neighbourhood $U$ of the identity of $G$ one can find a finite subset $F$ of $G$ such that $G=FU$. It is well known that a 
topological group $G$ is precompact if and only if it is a subgroup of some compact group.
A classical result of Comfort and Ross \cite{CR} says that pseudocompact groups are precompact. 

{\em All topological groups in this paper are assumed to be Hausdorff.} 

\section{Introduction}

Recall  that a point $x$ of a topological space $X$ is an {\em accumulation point\/} of a sequence 
$\{x_n:n\in\N\}$ of points of $X$ provided that the set 
$\{n\in\N: x_n\in V\}$ is infinite for every neighbourhood $V$ of $x$ in $X$.

\begin{definition}
\label{strongly:pseudo}
(i) Let $\mathcal{U} = \{U_n:n\in\N\}$ be a sequence of 
sets.
If $x_n\in U_n$ for every $n\in\N$, we shall call the sequence $\{x_n : n \in \N \}$ a \emph{selection for $\mathcal{U}$}.

(ii) A topological space $X$ is {\em selectively pseudocompact\/}
if 
every sequence $\{U_n:n\in\N\}$
of non-empty open subsets of $X$ 
admits
a selection $\{x_n : n \in \N \}$ 
which has an accumulation point in $X$. 
\end{definition}

This
notion was introduced by Garc\'{\i}a-Ferreira and Ortiz-Castillo \cite{GF-OC} under the name ``strong pseudocompactness''.
The present name and an equivalent reformulation of the property given in 
item
(ii) of Definition \ref{strongly:pseudo}
is due to Dorantes-Aldama and the first author \cite[Theorem 2.1 and Definition 2.2]{DS}.
One easily sees that
$$
\text{countably compact}
\to
\text{selectively pseudocompact}
\to
\text{pseudocompact}.
$$

In 2015, 
Garc\'{\i}a-Ferreira and Tomita 
constructed 
a Boolean pseudocompact group 
that is not selectively pseudocompact
\cite[Example 2.4]{GF-T}.
In this paper we establish a fairly general result involving the Bohr topology on all countable subgroups of a given topological group (Theorem \ref{main:result}) from which we deduce that  many known examples in the literature of pseudocompact Boolean groups {\em automatically\/} fail to be selectively pseudocompact.
In particular, each of the strongly self-dual pseudocompact groups constructed by Tkachenko \cite[Theorem 3.3]{T}
in 2009
is not selectively pseudocompact either.
Therefore, even strongly self-dual pseudocompact Boolean groups  need not be
selectively pseudocompact (Corollary \ref{self:dual:corollary}). 
Furthermore, pseudocompact group topologies with property
$\sharp$ 
on Boolean groups
constructed by Galindo and Macario \cite{GM} in 2011
are also not selectively pseudocompact.
Based on this, we show that, under the Singular Cardinal Hypothesis SCH, {\em every\/} pseudocompact Boolean group admits a pseudocompact group topology which fails to be selectively pseudocompact
(Corollary \ref{Galindo:Macario}).

\section{Preliminaries}

Every abelian group $G$ has the maximal precompact
group topology on $G$ called its 
{\em Bohr topology\/}. This topology is simply the initial topology 
with respect to 
the family of all homomorphisms from $G$ to the circle group $\T$.
We shall use $G^{\#}$ to denote the abelian group $G$ endowed with its Bohr topology.

\begin{definition}
\cite{T88}
\label{h-embedded}
A subgroup $H$ of a topological group $G$ is said to be {\em $h$-embedded in $G$\/} if every homomorphism from $H$ to the circle group $\T$ is a restriction of some continuous group homomorphism from $G$ to $\T$.
\end{definition}

The next fact and its corollary are part of folklore.
\begin{fact}
\label{single:sugroup}
If $H$ is an $h$-embedded subgroup of an abelian topological group $G$, then the subspace topology 
on
$H$ induced by the topology of $G$
is finer than
the Bohr topology of $H$.   
\end{fact}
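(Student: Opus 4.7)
The plan is to invoke the universal property that characterizes the Bohr topology as the initial (coarsest) topology making every character $H \to \T$ continuous. Once this is set up, the conclusion follows almost immediately from the definition of $h$-embeddedness.

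First, I would recall the following standard criterion: a topology $\tau$ on $H$ is finer than the initial topology induced by a family of maps $\{\chi_i: H \to \T\}_{i \in I}$ if and only if every $\chi_i$ is $\tau$-continuous. Taking $\tau$ to be the subspace topology inherited from $G$ and the family to be \emph{all} group homomorphisms from $H$ to $\T$ (which is precisely the family defining the Bohr topology of $H$, since the subgroup $H$ of the abelian group $G$ is itself abelian), it suffices to show that every homomorphism $\chi: H \to \T$ is continuous with respect to the subspace topology on $H$ coming from $G$.

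Next, I would fix an arbitrary homomorphism $\chi: H \to \T$. By the hypothesis that $H$ is $h$-embedded in $G$ (Definition \ref{h-embedded}), there exists a continuous homomorphism $\widetilde{\chi}: G \to \T$ whose restriction to $H$ coincides with $\chi$. Since the restriction of a continuous map to a subspace is continuous in the subspace topology, $\chi$ is continuous on $H$ with respect to the topology inherited from $G$. Applying the criterion from the previous paragraph then yields that the subspace topology on $H$ is finer than its Bohr topology, as required.

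The argument is essentially a direct unwinding of two definitions (the initial-topology description of the Bohr topology, and the extension property defining $h$-embeddedness), so there is no real obstacle; the only point requiring mild care is to ensure that the family of characters used to define the Bohr topology of $H$ is exactly the family appearing in the definition of $h$-embeddedness, which is automatic because $H$ inherits abelianness from $G$.
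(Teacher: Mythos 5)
Your proposal is correct and follows exactly the same route as the paper's (very terse) proof: both arguments identify the Bohr topology of $H$ as the initial topology generated by all characters of $H$ and then use $h$-embeddedness to extend each character to a continuous character of $G$, whose restriction is continuous in the subspace topology. You have simply spelled out the details that the paper leaves to the reader.
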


\begin{proof}
Since the Bohr topology of $H$ is the initial topology 
with respect to 
the family of all homomorphisms from $H$ to the circle group $\T$,
the conclusion easily follows from Definition \ref{h-embedded}.
\end{proof}

\begin{corollary}
\label{h:embedded:vs:Bohr}
If $G$ is an abelian topological group such that every countable subgroup of $G$ is $h$-embedded in $G$,  
then the subspace topology 
on each countable subgroup 
$H$ of $G$ 
induced by the topology of $G$
is finer than
the Bohr topology of $H$.   
\end{corollary}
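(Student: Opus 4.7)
The plan is to observe that Corollary \ref{h:embedded:vs:Bohr} is a direct consequence of Fact \ref{single:sugroup}: there is essentially no work beyond quantifying over all countable subgroups. The argument does not require any additional machinery beyond what is already established.

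First, I would fix an arbitrary countable subgroup $H$ of $G$. By the hypothesis of the corollary, $H$ is $h$-embedded in $G$. Since $G$ is abelian (assumed in the corollary) and $H$ is $h$-embedded, all the hypotheses of Fact \ref{single:sugroup} are verified for this particular $H$. Applying that fact, I immediately conclude that the subspace topology on $H$ induced from $G$ is finer than the Bohr topology of $H$.

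Since $H$ was chosen as an arbitrary countable subgroup, the conclusion holds uniformly for every countable subgroup of $G$, which is exactly what the corollary asserts. The only thing worth being careful about is not to require anything more of $H$ than countability: Fact \ref{single:sugroup} applies to any $h$-embedded subgroup, regardless of cardinality, so the countability restriction enters only through the hypothesis of the corollary and is harmlessly preserved in the conclusion.

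There is no real obstacle here; the proof is a one-line invocation of the preceding fact applied subgroup-by-subgroup, and the statement is recorded as a corollary precisely because it is the form actually used in the sequel (presumably in the formulation of Theorem \ref{main:result}, where the hypothesis on every countable subgroup is the natural one to verify in examples).
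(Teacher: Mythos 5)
Your proof is correct and is exactly the argument the paper intends: the corollary is stated without a separate proof precisely because it follows by applying Fact \ref{single:sugroup} to each countable subgroup individually, which is what you do. Nothing is missing.
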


The class of abelian topological groups $G$ such that every countable subgroup of $G$ is $h$-embedded in $G$ plays a prominent role in Pontryagin duality theory, as witnessed by Fact \ref{reflexivity:fact} below.

\begin{remark}
The authors do not know if the converse implications in Fact \ref{single:sugroup}
and Corollary \ref{h:embedded:vs:Bohr} hold.
\end{remark} 

\begin{remark}
\label{h:embedded-remark}
{\em If all countable subgroups of a topological group $G$ are closed in $G$, then the closure of every countable subset of $G$ is countable\/}.
 Indeed, let $X$ be a countable subset of $G$.
Then the subgroup $\grp{X}$ of $G$ is also countable, so it is closed in $G$ by our assumption.
Since $X\subseteq \grp{X}$, the closure of $X$ in $G$ is contained in the countable set $\grp{X}$, so it is countable.
\end{remark}

\begin{fact}
\label{subgroups:of:Bohr:topology:are:closed}
\cite[\S 2]{CS}
If $G$ is an abelian group, then every subgroup $H$ of $G$  is closed in $G^\#$.
\end{fact}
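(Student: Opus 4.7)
The plan is to show directly that the complement $G \setminus H$ is open in $G^{\#}$. Fix an arbitrary element $g \in G \setminus H$; I want to produce a Bohr-open neighbourhood of $g$ that misses $H$ entirely. The natural tool is a character of $G$ vanishing on $H$ but not on $g$: given such a homomorphism $\phi : G \to \T$, the preimage $\phi^{-1}(V)$ of any open $V \ni \phi(g)$ with $0 \notin V$ is a Bohr-open neighbourhood of $g$ disjoint from $H$, and the result follows.

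To construct $\phi$, I would pass to the quotient. Let $\pi : G \to G/H$ be the quotient homomorphism; since $g \notin H$, the coset $\pi(g) = g + H$ is a non-identity element of the abelian group $G/H$. The next step invokes the standard duality fact that characters separate points on any discrete abelian group: there exists a homomorphism $\chi : G/H \to \T$ with $\chi(\pi(g)) \neq 0$. Setting $\phi = \chi \circ \pi$ gives a homomorphism $G \to \T$ with $\phi(H) = \{0\}$ and $\phi(g) \neq 0$, which is continuous on $G^{\#}$ by the very definition of the Bohr topology as the initial topology generated by all such homomorphisms.

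The only nontrivial ingredient is the separation fact in the previous paragraph. I would justify it by exhibiting a character of the cyclic subgroup $\grp{\pi(g)} \leq G/H$ that is nonzero on $\pi(g)$ (pick any root of unity of the appropriate order in the finite case, or any irrational rotation in the infinite cyclic case) and then extending to all of $G/H$ using divisibility, hence injectivity, of $\T$ in the category of abelian groups. This is the only real content of the argument; the rest is a routine unwinding of the definition of the initial topology.

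I do not foresee any serious obstacle: the argument is essentially a restatement of the well-known fact that every discrete abelian group is maximally almost periodic, combined with the observation that a subgroup is precisely the kernel of its own quotient map. The proof should be just a couple of lines.
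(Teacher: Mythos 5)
Your argument is correct and complete: the paper itself offers no proof of this Fact, citing only Comfort--Saks \cite[\S 2]{CS}, and your proof is precisely the standard one found there --- lift a character of $G/H$ that is nontrivial at $g+H$ (which exists since $\T$ is divisible, hence injective, so characters separate points of any discrete abelian group) to a homomorphism $\phi:G\to\T$ with $\phi(H)=\{0\}$ and $\phi(g)\neq 0$, and observe that $\phi^{-1}(V)$ for open $V\ni\phi(g)$ with $0\notin V$ is a Bohr-open neighbourhood of $g$ missing $H$. No gaps; the only cosmetic remark is that in the infinite cyclic case any non-identity value of $\T$ suffices for the character of $\grp{\pi(g)}$, so the irrationality of the rotation is not needed.
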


\begin{proposition}
\label{stronger:proposition}
Let $G$ be a topological abelian group such that 
the subspace topology 
on each countable subgroup 
$H$ of $G$ 
induced by the topology of $G$
is finer than
the Bohr topology of $H$.   
Then:
\begin{itemize}
\item[(i)]
  all countable subgroups of $G$
 are closed, and 
\item[(ii)] 
 all
separable pseudocompact subsets of $G$ are finite.
\end{itemize}
\end{proposition}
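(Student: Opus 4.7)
For (i), the plan is to fix a countable subgroup $H\le G$ and an arbitrary point $x\in\overline{H}$, and show that $x\in H$. The key move is to pass to the still-countable subgroup $H'=\grp{H\cup\{x\}}$ and apply the hypothesis there: the subspace topology on $H'$ refines $(H')^{\#}$. By Fact~\ref{subgroups:of:Bohr:topology:are:closed}, $H$ is closed in $(H')^{\#}$, and hence closed in the finer subspace topology on $H'$. Since $x\in H'$ lies in the closure of $H$ taken inside $H'$, which coincides with $\overline{H}\cap H'$, we conclude $x\in H$.

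For (ii), let $S\subseteq G$ be separable and pseudocompact, and fix a countable dense $D\subseteq S$. Part~(i) says $\grp{D}$ is closed in $G$; combined with Remark~\ref{h:embedded-remark} this forces $\overline{D}$, and hence $S$, to be countable. The plan is then to promote pseudocompactness to compactness by a standard chain of implications: $S$ is Tychonoff (as a subspace of a topological group) and countable, hence Lindel\"of; Lindel\"of regular spaces are normal; normal pseudocompact spaces are countably compact (apply Tietze to a putative infinite closed discrete subset to build an unbounded continuous function); and countably compact Lindel\"of spaces are compact. Thus $S$ is compact in $G$.

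Finally, setting $H=\grp{S}$, the hypothesis provides a continuous identity map $H\to H^{\#}$, under which $S$ has compact image in $H^{\#}$. The main obstacle is this last step: concluding that a compact subset of $H^{\#}$, with $H$ a countable abelian group, must be finite. I plan to invoke Glicksberg's theorem for the discrete LCA group $H$, according to which $H$ and $H^{\#}$ have the same compact subsets, so for discrete $H$ those are precisely the finite subsets. A more self-contained alternative is that every countable compact Hausdorff space is metrizable and thus supplies a non-trivial convergent sequence whenever infinite, whereas $H^{\#}$ is well known to contain no non-trivial convergent sequences. Either route forces $S$ to be finite.
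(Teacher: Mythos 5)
Your proof is correct. Part (i) is essentially the paper's argument: both pass to the countable subgroup generated by $H$ and the extra point, apply Fact~\ref{subgroups:of:Bohr:topology:are:closed} there, and transfer closedness along the comparison of topologies; you phrase this with closures where the paper exhibits an explicit separating neighbourhood, a purely cosmetic difference. Part (ii) follows the same skeleton as the paper's --- first use (i) and Remark~\ref{h:embedded-remark} to see that $S$ is countable, then play the subspace topology on $K=\grp{S}$ against the Bohr topology $K^{\#}$ --- but your middle step and endgame are genuinely more self-contained. The paper cites \cite[Lemma 10]{SY} to extract a non-trivial convergent sequence from the infinite separable pseudocompact set and derives a contradiction with the absence of such sequences in $K^{\#}$. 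You instead prove outright that a countable pseudocompact Tychonoff space is compact (countable $\Rightarrow$ Lindel\"of, Lindel\"of regular $\Rightarrow$ normal, normal pseudocompact $\Rightarrow$ countably compact via Tietze applied to a closed discrete countable set, countably compact Lindel\"of $\Rightarrow$ compact), and then finish either by Glicksberg's theorem identifying the compact subsets of the discrete group $K$ with those of $K^{\#}$ (hence finite), or by metrizability of countable compacta plus the no-convergent-sequences property of the Bohr topology; the second option recovers the paper's own conclusion. Both closing routes are valid --- for the Glicksberg route one should note that for a discrete abelian group the maximal precompact topology coincides with the topology induced from the Bohr compactification, so the theorem applies. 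What your version buys is independence from \cite{SY}, at the cost of a longer chain of standard general-topology implications.
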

\begin{proof}
(i)
Let $C$ be a countable subgroup of $G$. To prove that $C$ is closed in $G$, we fix an arbitrary element $g\in G\setminus C$ 
and find an open neighbourhood $V$ of $g$ disjoint from $C$.
Let $H=\grp{C\cup\{g\}}$.
Note that $C$ is closed in $H^\#$ by Fact \ref{subgroups:of:Bohr:topology:are:closed}, so we can fix an open 
neighbourhood $U$ of $g$ in $H^\#$ disjoint from $C$.
Since 
$H$
is countable, by our assumption, the subspace topology on $H$ is finer than the Bohr topology of $H$.
Therefore, we can find an open subset $V$ of $G$ such that 
$V\cap H=U$.
Since $g\in U$ and $U\cap C=\emptyset$, it follows that $V$ is an open neighbourhood of $g$ in $G$ disjoint from $C$.

(ii)
By (i), all countable subgroups of $G$ are closed in $G$, so we can apply Remark \ref{h:embedded-remark} to conclude that the closure of each countable subset of $G$ is countable. 
Let $X$ be in infinite separable pseudocompact subspace of $G$.
By \cite[Lemma 10]{SY}, $X$ contains a non-trivial convergent sequence.  The subgroup $K=\grp{X}$ of $G$ is countable. By the assumption of our proposition, 
the Bohr topology of $K$ is coarser than the subspace topology 
of $K$ inherited from $G$. Since $X$ is a convergent sequence in $K$,  it must be a convergent sequence in $K^\#$.
This contradicts the well-known fact that the Bohr topology does not have non-trivial convergent sequences \cite{Glicksberg} (see 
Remark \ref{Bohr:psc:remark} below
for more general result).
\end{proof}

Combining Corollary \ref{h:embedded:vs:Bohr} with Proposition 
\ref{stronger:proposition}, we obtain the following 

\begin{corollary}
\label{h-emb:weakening}
If all countable subgroups of an abelian group $G$ are $h$-embedded in $G$, then 
 all
separable pseudocompact subsets of $G$ are finite.
\end{corollary}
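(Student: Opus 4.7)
The plan is to obtain the statement by chaining together two results already established in this section. The hypothesis that every countable subgroup of $G$ is $h$-embedded in $G$ is precisely the hypothesis of Corollary~\ref{h:embedded:vs:Bohr}, whose conclusion is that the subspace topology on each countable subgroup $H$ of $G$ induced by $G$ is finer than the Bohr topology of $H$. That conclusion is, in turn, exactly the hypothesis of Proposition~\ref{stronger:proposition}. Applying item (ii) of that proposition immediately yields the desired statement: every separable pseudocompact subset of $G$ is finite.

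Since the argument is just a two-line composition of the preceding results, there is essentially no obstacle to overcome. For transparency it is worth tracking what happens inside the combined argument. If $X \subseteq G$ were an infinite separable pseudocompact subset, then the countable subgroup $K = \grp{X}$ would be closed in $G$ by Proposition~\ref{stronger:proposition}(i); by Remark~\ref{h:embedded-remark} the closure of $X$ in $G$ would then be a countable subset of $K$; the lemma of Shakhmatov--Ya\~nez cited in the proof of Proposition~\ref{stronger:proposition}(ii) would produce a non-trivial convergent sequence inside $X$; this sequence would continue to converge in $K^{\#}$ because the induced topology on $K$ refines its Bohr topology; and that would contradict Glicksberg's theorem that $K^{\#}$ admits no non-trivial convergent sequences. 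The only small point I would verify while writing the formal proof is that the statement implicitly assumes $G$ to be a topological group, since the notion of an $h$-embedded subgroup already requires a topology on the ambient group.
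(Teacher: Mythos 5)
Your proof is correct and is exactly the paper's argument: the corollary is stated there as an immediate combination of Corollary~\ref{h:embedded:vs:Bohr} with Proposition~\ref{stronger:proposition}(ii), which is precisely your two-step composition. The extra unpacking you give is a faithful summary of what happens inside those cited proofs, so nothing further is needed.
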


\begin{remark}
\label{triple:remark}
(1)
Hern\'{a}ndez and Macario~\cite{HM} say that a space $X$ is {\em countably pseudocompact\/} if 
every countable subset of $X$ is contained in a separable pseudocompact subset of $X$.
A space $X$ is said to be {\em countably pracompact\/} if $X$ contains a dense set $Y$ such
that every infinite subset of $Y$ has an accumulation point in $X$; see \cite[Ch. III, Sec. 4]{Arh2}.

(2)
As was noted in the text prior to \cite[Remark 2]{SY}, 
{\em if all separable pseudocompact subsets of a topological space $X$ are finite, then 
$X$ does not 
contain infinite 
subsets which 
are either countably pseudocompact or countably pracompact;
in particular, $G$ does not contain infinite countably compact subsets.}
\end{remark}

\begin{remark}
\label{ACDT:remark}
(a)
In \cite[Proposition 2.1]{ACDT}, 
Ardanza-Trevijano,
Chasco,
Dom\'{\i}nguez
and Tkachenko
proved 
the following result.
{\em Let $G$ be a topological abelian group such that every countable
subgroup of $G$ is $h$-embedded in $G$. Then all countable subgroups of $G$
are closed, all
compact subsets of $G$ are finite, and $G$ is sequentially closed in its completion.\/}

(b) Corollary \ref{h:embedded:vs:Bohr} shows that the assumption of Proposition \ref{stronger:proposition} is (potentially) weaker than that of the statement in item (a).

(c)
It follows from Remark \ref{triple:remark}~(2) that
item (ii) of Proposition \ref{stronger:proposition}
strengthens the second conclusion of the statement quoted in (a).
In particular, 
Corollary \ref{h-emb:weakening}
strengthens the second conclusion of 
the statement in item (a).
\end{remark}

\begin{remark}
\label{Bohr:psc:remark}
Item (ii) of Proposition \ref{stronger:proposition} should be compared with \cite[Corollary 6.4]{DS-Kronecker} which says that {\em the Bohr topology of an abelian group does not have infinite pseudocompact subsets\/}.
Indeed,
it is easy to see that every subgroup $H$ of the topological group $G^\#$ 
is $h$-embedded in $G^\#$ and inherits from $G^\#$ its Bohr topology. This means that every abelian group $G$ equipped with its Bohr topology satisfies the assumptions of Proposition \ref{stronger:proposition} and its 
Corollary \ref{h-emb:weakening}.
Therefore, the non-existence of infinite {\em separable\/}
pseudocompact subsets in 
item (ii) of Proposition \ref{stronger:proposition}
can be viewed as the best possible conclusion under 
the much weaker assumptions of Proposition \ref{stronger:proposition} and 
Corollary \ref{h-emb:weakening}.
That the word ``separable'' cannot be omitted from item (ii) of Proposition \ref{stronger:proposition} and its Corollary \ref{h-emb:weakening} will be seen from numerous examples of pseudocompact groups satisfying
the assumptions of these two results exhibited in Section 
\ref{section:applications}.
\end{remark}

A subset $X$ of an abelian group $G$ is {\em independent\/}
if $0\not\in X$ and $\grp{A}\cap \grp{X\setminus A}=\{0\}$ for every subset $A$ of $X$.

We shall need
the following result 
of 
Hart and van Mill shown in \cite[Lemma 1.4]{H-vM}.

\begin{fact}
\label{hart-vanmill:lemma}
Every independent subset 
of an abelian group $G$
is closed and discrete in $G^{\#}$.
\end{fact}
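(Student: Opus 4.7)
My plan is to exploit the independence of $X$ together with the injectivity of $\T$ as a $\Z$-module to manufacture, on demand, continuous characters $G\to\T$ (which automatically belong to the defining family of the Bohr topology on $G^{\#}$) that isolate every point of $X$ and separate every $g\in G\setminus X$ from $X$. The main algebraic ingredient is that independence gives $\grp{X}=\bigoplus_{x\in X}\grp{x}$: any family $(c_x)_{x\in X}$ with $c_x\in\T$ and $\mathrm{ord}(x)\cdot c_x=0$ determines a homomorphism $\grp{X}\to\T$ sending $x\mapsto c_x$, and divisibility of $\T$ then extends it to a character $\phi:G\to\T$.

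For discreteness at a point $x_0\in X$, I would choose $c_{x_0}\neq 0$ admissible for the order of $x_0$ and set $c_x=0$ for every other $x\in X$; the resulting character satisfies $\phi(x_0)\neq 0$ and $\phi(X\setminus\{x_0\})=\{0\}$, so any open neighbourhood of $\phi(x_0)$ in $\T$ missing $0$ pulls back to an open subset of $G^{\#}$ whose intersection with $X$ is $\{x_0\}$. For closedness, I fix $g\in G\setminus X$ and split into two cases. If $g\notin\grp{X}$, injectivity of $\T$ produces a character $\phi:G\to\T$ vanishing on $\grp{X}$ but nonzero on $g$, and a neighbourhood of $\phi(g)$ missing $0$ works. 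If $g\in\grp{X}\setminus X$, I write $g=\sum_{x\in F}n_x x$ uniquely with $F\subseteq X$ finite and every $n_x\not\equiv 0\pmod{\mathrm{ord}(x)}$; for each $x\in F$ I build, by the same recipe, a character $\phi_x:G\to\T$ with $\phi_x(x)=1/\mathrm{ord}(x)$ (or an irrational value when $x$ has infinite order) and $\phi_x(X\setminus\{x\})=\{0\}$. The product map $\Phi=(\phi_x)_{x\in F}:G\to\T^{F}$ sends $X$ into the finite set $\{0\}\cup\{\phi_x(x)e_x:x\in F\}$, while $\Phi(g)=(n_x\phi_x(x))_{x\in F}$; a direct check using $n_x\not\equiv 0\pmod{\mathrm{ord}(x)}$—together with the observation that when $|F|=1$ the hypothesis $g\neq x$ forces $n_x\not\equiv 1\pmod{\mathrm{ord}(x)}$—shows $\Phi(g)$ avoids $\Phi(X)$, whence Hausdorffness of $\T^{F}$ yields an open neighbourhood of $g$ in $G^{\#}$ disjoint from $X$.

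The main obstacle is the subcase $g\in\grp{X}\setminus X$: because the image of $X$ under a single generic character is typically dense in $\T$, one character cannot suffice, so I must bundle one character per element of the support $F$ so that the joint image in $\T^{F}$ becomes finite and the congruence check $\Phi(g)\notin\Phi(X)$ reduces to elementary arithmetic in $\T$. The uniqueness of the expansion $g=\sum_{x\in F} n_x x$, which is precisely what independence provides, is the key that makes this coordinate-by-coordinate bookkeeping go through.
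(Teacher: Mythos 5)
The paper offers no proof of this statement at all --- it is quoted directly from \cite[Lemma 1.4]{H-vM} --- so your argument can only be measured against the standard proof, which it essentially reconstructs. Your overall strategy is sound: the paper's definition of independence does yield $\grp{X}=\bigoplus_{x\in X}\grp{x}$, so a character of $\grp{X}$ may be prescribed arbitrarily on $X$ subject to the order constraint and then extended over $G$ by divisibility of $\T$. The discreteness argument, the case $g\notin\grp{X}$, and the bundled map $\Phi=(\phi_x)_{x\in F}:G\to\T^{F}$ with its congruence check (including the extra observation needed when $|F|=1$) all work correctly whenever $F\neq\emptyset$.

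There is, however, one genuine gap: the point $g=0$. Since independence requires $0\notin X$, the element $0$ lies in $\grp{X}\setminus X$ and must be separated from $X$ in the closedness argument; but its support $F$ is empty, so $\Phi$ is the map into the one-point group $\T^{\emptyset}$ and the assertion that $\Phi(g)$ avoids $\Phi(X)$ is false in this degenerate case. Your construction, as written, produces no Bohr neighbourhood of $0$ missing $X$. The repair requires one further idea rather than mere bookkeeping: for each $x\in X$ choose $c_x\in\T=\R/\Z$ with $\mathrm{ord}(x)\cdot c_x=0$ and with $c_x$ lying in the arc $[1/4,3/4]$ (possible because $\mathrm{ord}(x)\ge 2$, so the multiples of $1/\mathrm{ord}(x)$ are spaced at most $1/2$ apart and one nonzero multiple falls in that arc; take $c_x=1/2$ when $x$ has infinite order). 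The resulting character $\phi:G\to\T$ maps all of $X$ outside the open arc $(-1/4,1/4)$, so $\phi^{-1}\bigl((-1/4,1/4)\bigr)$ is a Bohr-open neighbourhood of $0$ disjoint from $X$. With this extra case your proof is complete.
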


A straightforward proof of the following folklore fact is left to the reader.
\begin{fact}
\label{independence:for:Boolean:groups}
A subset $X$ of a Boolean group 
is independent if and only if 
$\sum_{a\in A} a\not=0$ for every non-empty finite subset $A$ of $X$.
\end{fact}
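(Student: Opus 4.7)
The plan is to prove both directions directly from the definition of independence, using the fact that in a Boolean group every element is its own inverse, so subgroups generated by subsets have a very explicit description as the set of finite sums of their elements (equivalently, they are $\F_2$-linear spans).

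For the forward direction ($\Rightarrow$), I would assume $X$ is independent and argue by contradiction: if $A$ is a non-empty finite subset of $X$ with $\sum_{a\in A} a = 0$, I pick any $a_0 \in A$ and rewrite the relation as $a_0 = \sum_{a \in A\setminus\{a_0\}} a$ (using $-a_0 = a_0$). This exhibits $a_0$ as an element of $\grp{\{a_0\}} \cap \grp{X\setminus\{a_0\}}$, which equals $\{0\}$ by independence applied with the subset $\{a_0\}$. Hence $a_0 = 0$, contradicting $0 \notin X$.

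For the converse ($\Leftarrow$), I would first note that taking $A = \{x\}$ for any $x \in X$ forces $x \neq 0$, so $0 \notin X$. Then, given any subset $A \subseteq X$ and any $y \in \grp{A} \cap \grp{X\setminus A}$, I would write $y = \sum_{b \in B} b = \sum_{c \in C} c$ for finite sets $B \subseteq A$ and $C \subseteq X\setminus A$, and add the two equalities to obtain $\sum_{d \in B \cup C} d = 0$ (using $y + y = 0$ and the disjointness $B \cap C = \emptyset$ which eliminates cancellations). The hypothesis forces $B \cup C = \emptyset$, hence $y = 0$.

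The argument is entirely routine; there is no real obstacle. The only mildly subtle point is the use of $B \cap C = \emptyset$ to ensure the sum $\sum_{b\in B} b + \sum_{c\in C} c$ equals $\sum_{d \in B\cup C} d$ (with no cancellation of repeated elements), which is immediate from $B \subseteq A$ and $C \subseteq X\setminus A$.
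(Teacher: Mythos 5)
Your proof is correct, and since the paper explicitly leaves this folklore fact as an exercise (``a straightforward proof \dots is left to the reader''), there is no authorial proof to diverge from; your argument is the standard intended one. Both directions are handled properly --- in particular, you correctly invoke independence with the singleton $\{a_0\}$ in the forward direction and correctly justify that $B\cap C=\emptyset$ prevents cancellation in the converse.
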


\section{An auxiliary construction}

This section is inspired by the proof of \cite[Lemma 2.1]{GF-OC}.

Even though 
all results in this section 
hold
for arbitrary (not necessarily commutative) topological groups $G$, we shall use abelian notations denoting the group operation of $G$ by $+$ and its identity element by~$0$.

\begin{lemma}
\label{inductive:construction}
Let $G$ be a non-discrete topological group
and $V_{-1}=G$. Then there exist a sequence of points $\{g_n : n \in \N \} \subseteq G \setminus \{0\}$
and a sequence $\{ V_n : n \in \N \}$ of open neighbourhoods of $0$ such that 
the following conditions hold
for every $n\in\N$:
\begin{itemize}
\item [($\alpha_n$)]
$0 \not \in g_n + V_n + V_n\subseteq V_{n-1}$, 
\item[($\beta_n$)] $V_n\subseteq V_{n-1}$.
\end{itemize}

\end{lemma}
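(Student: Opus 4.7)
\medskip

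The plan is to construct the sequences $\{g_n\}$ and $\{V_n\}$ recursively, treating $V_{-1}=G$ as the base of the induction and performing one uniform step to go from $V_{n-1}$ to $(g_n,V_n)$.

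For the inductive step, assume $V_{n-1}$ has been defined as an open neighbourhood of $0$. Since $G$ is non-discrete, $\{0\}$ is not open in $G$, so $V_{n-1}$ cannot equal $\{0\}$; thus we can select $g_n\in V_{n-1}\setminus\{0\}$. To satisfy the two inclusion requirements $V_n\subseteq V_{n-1}$ and $g_n+V_n+V_n\subseteq V_{n-1}$, I would invoke continuity of the group operation $(x,y,z)\mapsto x+y+z$ at the point $(g_n,0,0)$ together with openness of $V_{n-1}$: this yields an open neighbourhood $W_1$ of $0$ with $g_n+W_1+W_1\subseteq V_{n-1}$. For the nonvanishing condition $0\notin g_n+V_n+V_n$, equivalently $-g_n\notin V_n+V_n$, I would use the Hausdorff assumption: since $-g_n\neq 0$, there exists an open neighbourhood $U$ of $0$ with $-g_n\notin U$, and then by continuity of addition at $(0,0)$ there exists an open neighbourhood $W_2$ of $0$ with $W_2+W_2\subseteq U$, so that $-g_n\notin W_2+W_2$.

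Finally, I would set
\[
V_n \;=\; V_{n-1}\cap W_1\cap W_2,
\]
which is an open neighbourhood of $0$. Then $\beta_n$ is immediate from $V_n\subseteq V_{n-1}$, the inclusion $g_n+V_n+V_n\subseteq g_n+W_1+W_1\subseteq V_{n-1}$ is the second half of $\alpha_n$, and $V_n\subseteq W_2$ ensures $0\notin g_n+V_n+V_n$, giving the first half of $\alpha_n$.

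I do not anticipate any serious obstacle: this is a routine three-fold application of continuity and Hausdorffness, with the only subtle ingredient being the use of non-discreteness to guarantee that one can always pick a nonzero element inside the current neighbourhood $V_{n-1}$. The single tiny point worth being careful about is that this use of non-discreteness is preserved through the induction automatically, because each $V_n$ is again an open neighbourhood of $0$ in the same non-discrete group $G$, so the argument at step $n+1$ proceeds identically.
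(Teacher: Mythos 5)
Your proof is correct and follows essentially the same inductive construction as the paper: pick a nonzero $g_n$ in $V_{n-1}$ using non-discreteness, then shrink to $V_n$ by continuity. The only cosmetic difference is that the paper handles both halves of $(\alpha_n)$ in a single continuity application by targeting the open set $V_{n-1}\setminus\{0\}$ (open since $G$ is Hausdorff), whereas you split the ``avoid $0$'' condition into a separate Hausdorffness-plus-continuity step and intersect; both are valid.
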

\begin{proof}
By induction on $n\in\N$, we shall select $g_n\in G\setminus\{0\}$
and an open neighbourhood $V_n$ of $0$ satisfying ($\alpha_n$).

Fix $n\in\N$. Suppose 
that  $g_i\in G_i\setminus\{0\}$ and an open neighbourhood $V_i$ of $0$ satisfying  ($\alpha_i$) 
and ($\beta_i$)
have already been chosen for every $i=0,\dots,n-1$. 
Since $V_{n-1}$ is an open neighbourhood of $0$
and $G$ is
non-discrete, we have $V_{n-1}\not=\{0\}$.
Then $U=V_{n-1}\setminus\{0\}$ is a non-empty open 
subset of $G$.
Thus, we can select some  $g_n\in U$.
Using the continuity of the group operation of $G$, we can find
an open neighbourhood $V_n$ of $0$ such that
$g_n+V_n+V_n\subseteq U$
and
$V_n\subseteq V_{n-1}$.
Now ($\alpha_n$) 
and ($\beta_n$) hold.
\end{proof}

\begin{lemma}
\label{lemma:2.2}
Under the assumptions of Lemma \ref{inductive:construction},
\begin{equation}
\label{eq:1:k}
\sum_{j=1}^k (g_{i_j} + V_{i_j}) + V_{i_k} \subseteq g_{i_1} + V_{i_1} + V_{i_1}
\end{equation}
for every strictly increasing finite sequence $i_1, \dots, i_k \in \N$.
\end{lemma}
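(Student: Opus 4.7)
The plan is to prove the inclusion by induction on the length $k$ of the increasing sequence. The base case $k=1$ is trivial since both sides coincide with $g_{i_1}+V_{i_1}+V_{i_1}$.

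For the inductive step, I would peel off the first summand, writing
\[
\sum_{j=1}^{k}(g_{i_j}+V_{i_j})+V_{i_k} = (g_{i_1}+V_{i_1}) + \Bigl(\sum_{j=2}^{k}(g_{i_j}+V_{i_j})+V_{i_k}\Bigr),
\]
and then apply the inductive hypothesis to the strictly increasing sequence $i_2<\dots<i_k$ to bound the parenthesized piece by $g_{i_2}+V_{i_2}+V_{i_2}$. At that point, condition $(\alpha_{i_2})$ gives $g_{i_2}+V_{i_2}+V_{i_2}\subseteq V_{i_2-1}$.

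To finish, I must absorb $V_{i_2-1}$ into $V_{i_1}$, which is where the strict inequality $i_1<i_2$ is used: since $i_1\le i_2-1$, iterating $(\beta_n)$ along the indices $i_1+1,i_1+2,\dots,i_2-1$ yields $V_{i_2-1}\subseteq V_{i_1}$. Combining,
\[
\sum_{j=1}^{k}(g_{i_j}+V_{i_j})+V_{i_k} \subseteq (g_{i_1}+V_{i_1})+V_{i_1}=g_{i_1}+V_{i_1}+V_{i_1},
\]
as required.

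The argument is essentially bookkeeping with the nested neighbourhood system, so there is no serious obstacle; the only point needing care is the clean use of the strict monotonicity of the indices when chaining $(\beta_n)$ to pass from $V_{i_2-1}$ to $V_{i_1}$ (the case $i_2=i_1+1$ being automatic since then $i_2-1=i_1$).
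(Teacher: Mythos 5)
Your proof is correct; it is the mirror image of the paper's argument. The paper also inducts on $k$, but peels off the \emph{last} summand: for a sequence $i_1<\dots<i_{k+1}$ it first applies $(\alpha_{i_{k+1}})$ to absorb $g_{i_{k+1}}+V_{i_{k+1}}+V_{i_{k+1}}$ into $V_{i_{k+1}-1}$, chains $(\beta_{i_k+1}),\dots,(\beta_{i_{k+1}-1})$ to shrink this to $V_{i_k}$, and only then invokes the inductive hypothesis on the initial segment $i_1,\dots,i_k$. You instead peel off the \emph{first} summand, apply the inductive hypothesis to the tail $i_2<\dots<i_k$ (which conveniently already ends in the correct trailing term $V_{i_k}$), and then use $(\alpha_{i_2})$ together with the chain of $(\beta_n)$'s from $i_1+1$ to $i_2-1$ to absorb the result into $V_{i_1}$. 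Both routes use exactly one application of an $(\alpha_n)$ condition and one chained run of $(\beta_n)$'s, and both avoid any use of commutativity (only associativity of the sumsets is needed, so either version works for the non-commutative groups the section claims to cover); neither buys anything over the other, and your handling of the degenerate case $i_2=i_1+1$ is the right small point to flag.
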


\begin{proof}
We prove this lemma by induction on the length $k$ of the sequence. Note that \eqref{eq:1:k}
trivially holds for sequences of length $1$.
Suppose that
\eqref{eq:1:k} has been verified for all strictly increasing sequences of length $k$, 
and let $i_1, \dots, i_{k+1} \in \N$ be a 
strictly increasing
sequence. Observe 
that
\begin{equation}
\label{eq:2:b}
\sum_{j=1}^{k+1} (g_{i_j} + V_{i_j}) + V_{i_{k+1}} \subseteq \sum_{j=1}^{k} (g_{i_j} + V_{i_j}) + g_{i_{k+1}} + V_{i_{k+1}} + V_{i_{k+1}} \subseteq \sum_{j=1}^{k} (g_{i_j} + V_{i_j}) 
+ V_{i_{k+1}-1}
\end{equation}
by 
($\alpha_{i_{k+1}}$).
Since $i_k<i_{k+1}$, from ($\beta_{i_k+1}$), ($\beta_{i_k+2}),\dots,(\beta_{i_{k+1}-1})$ we obtain
\begin{equation}
\label{nested:Vs}
V_{i_k}\supseteq V_{i_k+1}\supseteq\dots\supseteq V_{i_{k+1}-1}.
\end{equation}
By inductive hypothesis, \eqref{eq:1:k} holds.
Combining 
\eqref{eq:1:k}, \eqref{eq:2:b} and \eqref{nested:Vs},
we get
$$\sum_{j=1}^{k+1} (g_{i_j} + V_{i_j}) + V_{i_{k+1}} 
\subseteq \sum_{j=1}^{k} (g_{i_j} + V_{i_j}) 
+ V_{i_{k+1}-1}
\subseteq \sum_{j=1}^{k} (g_{i_j} + V_{i_j}) + V_{i_k} \subseteq g_{i_1} + V_{i_1} + V_{i_1}.$$
This finishes the inductive step.
\end{proof}

\begin{corollary}
\label{garcia-tomita:lemma}
Each non-discrete group $G$ has
a sequence 
$\{ U_n : n \in \N \}$ 
of non-empty open subsets of $G$ such that 
\begin{equation}
\label{zero:is:not:in:the:sum}
0\not\in \sum_{j=1}^{k} U_{i_j}
\end{equation}
for every 
strictly increasing finite sequence $i_1, \dots, i_k \in \N$.
\end{corollary}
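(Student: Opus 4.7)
The plan is to apply Lemma \ref{inductive:construction} to obtain the sequences $\{g_n : n \in \N\}$ and $\{V_n : n \in \N\}$, and then simply define $U_n = g_n + V_n$ for each $n \in \N$. Each $U_n$ is a non-empty open subset of $G$ because $V_n$ is an open neighbourhood of $0$ (so $g_n \in U_n$ and $U_n$ is a translate of an open set).

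To verify \eqref{zero:is:not:in:the:sum}, fix any strictly increasing finite sequence $i_1, \dots, i_k \in \N$. Since $0 \in V_{i_k}$, we have the trivial inclusion
$$
\sum_{j=1}^{k} U_{i_j} = \sum_{j=1}^{k} (g_{i_j} + V_{i_j}) \subseteq \sum_{j=1}^{k} (g_{i_j} + V_{i_j}) + V_{i_k}.
$$
Applying Lemma \ref{lemma:2.2} to the right-hand side yields
$$
\sum_{j=1}^{k} U_{i_j} \subseteq g_{i_1} + V_{i_1} + V_{i_1}.
$$
By clause ($\alpha_{i_1}$) of Lemma \ref{inductive:construction}, the set on the right does not contain $0$, so neither does $\sum_{j=1}^{k} U_{i_j}$.

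There is really no obstacle: all the inductive work has already been absorbed into Lemmas \ref{inductive:construction} and \ref{lemma:2.2}. The only minor point worth double-checking is that the step passing from $\sum_{j=1}^{k} U_{i_j}$ to $\sum_{j=1}^{k} U_{i_j} + V_{i_k}$ uses only that $0 \in V_{i_k}$, which is immediate since $V_{i_k}$ is a neighbourhood of the identity. Thus the corollary follows at once, and the sole design choice in the proof is to pick the translates $g_n + V_n$ rather than $V_n$ themselves as the witnessing open sets.
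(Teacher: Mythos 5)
Your proof is correct and is essentially identical to the one in the paper: both define $U_n = g_n + V_n$, pad the sum with $V_{i_k}$ using $0\in V_{i_k}$, invoke Lemma \ref{lemma:2.2} to land inside $g_{i_1}+V_{i_1}+V_{i_1}$, and finish with ($\alpha_{i_1}$). No differences worth noting.
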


\begin{proof}
Consider the sequences 
$\{g_n: n \in \N\}$ and $\{V_n : n \in \N \}$ 
as in the conclusion of 
Lemma \ref{inductive:construction}.
Let 
$U_n = g_n + V_n$ for every $n \in \N$. 
Clearly, $\mathcal{U}=\{U_n : n \in \N \}$ is a sequence of non-empty open subsets of $G$.

Let $i_1, \dots, i_k \in \N$ be a strictly increasing finite sequence. 
Then
$$
\sum_{j=1}^{k} U_{i_j} = \sum_{j=1}^{k} (g_{i_j} + V_{i_j}) 
\subseteq 
\sum_{j=1}^{k} (g_{i_j} + V_{i_j}) 
+
V_{i_k}
\subseteq g_{i_1} + V_{i_1} + V_{i_1}
$$
by $0\in V_{i_k}$ and
Lemma \ref{lemma:2.2}.
Since $0\not\in g_{i_1} + V_{i_1} + V_{i_1}$ by 
($\alpha_{i_{1}}$),
this implies \eqref{zero:is:not:in:the:sum}.
\end{proof}

\section{Main result}

We are now ready to state and prove our main result.

\begin{theorem}
\label{main:result}
Let $G$ be 
an infinite
Boolean 
topological group such that 
the subspace topology 
on
every countable subgroup $H$ of $G$
is finer than
the Bohr topology of $H$.
Then $G$ is not selectively pseudocompact.
\end{theorem}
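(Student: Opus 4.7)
\medskip
\noindent\emph{Proof plan.}
The natural strategy is to use the construction from the previous section to produce an explicit witness to the failure of selective pseudocompactness, and then transfer the obstruction back and forth between the topology of $G$ and the Bohr topology via the hypothesis. First, since $G$ is infinite and Hausdorff (hence, as an abelian topological group, non-discrete; otherwise $G$ would be discrete and countable subgroups would violate Bohr-topology finiteness), I would apply Corollary~\ref{garcia-tomita:lemma} to obtain a sequence $\mathcal{U}=\{U_n:n\in\N\}$ of non-empty open subsets of $G$ with $0\notin \sum_{j=1}^k U_{i_j}$ for every strictly increasing finite sequence $i_1<\dots<i_k$ in $\N$. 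The goal will be to show that \emph{no} selection $\{x_n:n\in\N\}$ for $\mathcal{U}$ has an accumulation point in $G$.

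Given any selection $x_n\in U_n$, the property of $\mathcal{U}$ forces $\sum_{j=1}^k x_{i_j}\neq 0$ for every strictly increasing $i_1<\dots<i_k$. In particular (taking $k=1$) the $x_n$ are all nonzero, and (taking $k=2$) using that $G$ is Boolean one sees $x_n\neq x_m$ for $n\neq m$, so the $x_n$ form a set. Fact~\ref{independence:for:Boolean:groups} then shows that $S=\{x_n:n\in\N\}$ is an \emph{independent} subset of $G$, and Fact~\ref{hart-vanmill:lemma} yields that $S$ is closed and discrete in $G^\#$.

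Now suppose, toward a contradiction, that some point $p\in G$ is an accumulation point of the sequence $\{x_n\}$ in $G$. Form the countable subgroup $K=\grp{S\cup\{p\}}$ of $G$. Independence of $S$ inside $G$ gives independence inside $K$, so by Fact~\ref{hart-vanmill:lemma} again $S$ is closed and discrete in $K^\#$; hence $S$ has no accumulation point in $K^\#$ at all. On the other hand, since $p\in K$ and $S\subseteq K$, the hypothesis that $p$ is an accumulation point in $G$ transfers immediately to the subspace topology on $K$ inherited from $G$. Because that subspace topology is finer than the Bohr topology of $K$ by assumption, every neighbourhood of $p$ in $K^\#$ is also a neighbourhood of $p$ in the subspace topology, and therefore $p$ would remain an accumulation point of $\{x_n\}$ in $K^\#$, contradicting the previous sentence.

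The one subtle point I would take a moment to get straight is the direction of the topology comparison: the hypothesis gives the subspace topology \emph{finer} than the Bohr topology on $K$, and one needs accumulation in the finer topology to imply accumulation in the coarser one (which is correct, because coarser neighbourhoods are finer neighbourhoods). The main conceptual step, which I would flag as the crux, is the need to embed the putative accumulation point $p$ together with the whole sequence into a single countable subgroup $K$ in order to apply the countable-subgroup hypothesis; once this is done, independence plus Fact~\ref{hart-vanmill:lemma} plus the topology comparison collide to give the contradiction, and since no selection for $\mathcal{U}$ can have an accumulation point in $G$, the group $G$ fails to be selectively pseudocompact.
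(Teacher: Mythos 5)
Your argument is essentially the paper's own proof: the same sequence $\{U_n\}$ from Corollary~\ref{garcia-tomita:lemma}, the same observation that any selection is faithfully indexed and independent, the same use of Fact~\ref{hart-vanmill:lemma} inside the countable subgroup generated by the selection together with the candidate accumulation point, and the same (correct) direction of the topology comparison. The only difference is cosmetic: you run the final step as a proof by contradiction, whereas the paper argues directly that each $g\in G$ fails to be an accumulation point.

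There is, however, one genuine flaw at the very start. Your parenthetical claim that an infinite Hausdorff abelian topological group is automatically non-discrete is false, and the justification offered (``countable subgroups would violate Bohr-topology finiteness'') does not work: if $G$ is discrete, every countable subgroup carries the discrete subspace topology, which \emph{is} finer than its Bohr topology, so a discrete infinite Boolean group satisfies all the hypotheses of the theorem. Since Corollary~\ref{garcia-tomita:lemma} applies only to non-discrete groups, your argument as written simply does not cover the discrete case. The gap is easy to close --- an infinite discrete group is not pseudocompact, hence not selectively pseudocompact --- and this is exactly how the paper disposes of that case before assuming non-discreteness; but as stated your reduction to the non-discrete case rests on a false assertion rather than on this observation.
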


\begin{proof}
Suppose that $G$ is discrete.
Since $G$ is 
infinite, it is not pseudocompact,  and thus $G$ is not selectively pseudocompact either.

From now on we shall suppose that $G$ is non-discrete.
Let $\mathcal{U} = \{U_n : n \in \N \}$ be the 
sequence
of 
non-empty open subsets of $G$ as in 
Corollary~\ref{garcia-tomita:lemma},
and let $\{x_n : n \in \N \}$ be an arbitrary selection for $\mathcal{U}$. 
According to item (ii) of Definition \ref{strongly:pseudo},
in order 
to
prove that $G$ is not selectively pseudocompact, it suffices to show that 
the sequence
$\{x_n : n \in \N \}$ 
does not have 
an accumulation point 
in $G$.
This will be established in Claim~\ref{new:claim} below.

\begin{claim}
\label{claim:1}
$X=\{x_n:n\in\N\}$ is a faithfully indexed independent subset of $G$.
\end{claim}
\begin{proof}
First, we check that $X$ is faithfully indexed. Let $m,n\in\N$ and $m<n$. Then 
$0\not\in U_m+U_n$
by Corollary~\ref{garcia-tomita:lemma}.
On the other hand,
$x_m+x_n\in U_m+U_n$, so 
$x_m+x_n\not=0$. Since $G$ is a Boolean group, this implies $x_m\not=x_n$.

Next, we check that $X$ is independent.
By Fact \ref{independence:for:Boolean:groups},
it suffices to show that 
$\sum_{j=1}^k x_{i_j}\not=0$ for every faithfully indexed finite subset $\{i_j:j=1,\dots,k\}$ of $\N$. Without loss of generality, we may assume that the sequence $i_1,\dots,i_k$ is 
strictly
increasing. Since
$\sum_{j=1}^k x_{i_j}\in \sum_{j=1}^k U_{i_j}$, from 
Corollary \ref{garcia-tomita:lemma} we conclude that $\sum_{j=1}^k x_{i_j}\not=0$.
\end{proof}

\begin{claim}
\label{new:claim}
The sequence
$\{x_n:n\in\N\}$ does not have 
an accumulation point in $G$.
\end{claim}

\begin{proof}
Let $g\in G$ be arbitrary,
and let $X$ be the set as in Claim~\ref{claim:1}.
The
subgroup $H=\grp{X \cup \{g\}}$ 
of $G$
is countable. 
Since $X$ is an independent subset of $G$ by Claim \ref{claim:1},
it is also an independent subset of $H$, as $H$ is a subgroup of $G$ containing $X$.
By Fact \ref{hart-vanmill:lemma}, $X$ is closed and discrete in $H^{\#}$.
Since $g\in H$, we can find an open neighbourhood $U$ of $g$ in $H^\#$
such that $U\cap X\subseteq \{g\}$.
Since the subspace topology on $H$ 
is finer than 
its Bohr topology by our assumption on $G$, 
the set $U$ is open in $H$.
Thus, 
there exists an open subset $V$ of $G$ such that $V \cap H = U$. Since $X\subseteq H$, we get 
$V\cap X=V\cap H\cap X=U\cap X\subseteq \{g\}$.
Since the set $X$ is faithfully indexed by Claim \ref{claim:1},
this means that $g$ cannot be 
an accumulation point
of the sequence
$\{x_n:n\in\N\}$.
Since $g\in G$ was taken arbitrarily, this means that the sequence
$\{x_n:n\in\N\}$ does not have 
an accumulation point in $G$.
\end{proof}
\end{proof}

From Corollary \ref{h:embedded:vs:Bohr}
and 
Theorem \ref{main:result}, one obtains the following

\begin{corollary}
\label{precompact:corollary}
Let $G$ be 
an infinite
Boolean group all countable subgroups of which are $h$-embedded in $G$. Then $G$ is not selectively pseudocompact.
\end{corollary}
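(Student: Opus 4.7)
The plan is simply to chain the two immediately preceding results together, since the hypothesis of Corollary \ref{precompact:corollary} feeds directly into Corollary \ref{h:embedded:vs:Bohr}, whose conclusion in turn is exactly the hypothesis of Theorem \ref{main:result}.

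In more detail, first I would note that, although the statement says ``Boolean group,'' the notions of $h$-embedding and of selective pseudocompactness are only meaningful in the topological setting, so $G$ should be read as an infinite Boolean topological group. Since every Boolean group is abelian (as recalled at the start of the paper), $G$ is in particular an abelian topological group, and the hypothesis that every countable subgroup of $G$ is $h$-embedded in $G$ is exactly the hypothesis of Corollary \ref{h:embedded:vs:Bohr}. Applying that corollary yields that, for every countable subgroup $H$ of $G$, the subspace topology on $H$ inherited from $G$ is finer than the Bohr topology of $H$.

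Next I would observe that this conclusion is precisely the hypothesis of Theorem \ref{main:result}, which also requires $G$ to be an infinite Boolean topological group, a condition we already have. Invoking Theorem \ref{main:result} then gives the desired conclusion that $G$ is not selectively pseudocompact.

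There is essentially no obstacle here: the corollary is a direct two-step implication, and the only thing to be careful about is interpreting ``Boolean group'' as ``Boolean topological group'' so that the previously established machinery applies verbatim.
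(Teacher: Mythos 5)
Your proposal is correct and matches the paper exactly: the corollary is stated in the paper as following "From Corollary \ref{h:embedded:vs:Bohr} and Theorem \ref{main:result}," which is precisely the two-step chain you describe. Your remark about reading "Boolean group" as "Boolean topological group" is a reasonable clarification but does not change the argument.
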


The rest of the paper is devoted to applications of 
Corollary \ref{precompact:corollary}
aimed at establishing the abundance of pseudocompact abelian groups which are not selectively pseudocompact.

\section{Applications of the main result}
\label{section:applications}

For an abelian topological group $G$, we denote by $\dual{G}$ the group of all continuous group homomorphisms  from $G$ to $\T$ endowed with the compact-open topology. Recall that $\dual{G}$  is called the {\em Pontryagin dual\/} of $G$.
For each $g\in G$, the map $\psi_g:\dual{G}\to\T$ defined by
$\psi_g(h)=h(g)$ for every $h\in\dual{G}$, is a continuous homomorphism from $\dual{G}$ to $\T$, so
$\psi_g\in \seconddual{G}$; that is, $\psi_g$ is an element of the second dual $\seconddual{G}$ of $G$.
Consider the map $\alpha_G: G\to \seconddual{G}$ defined 
by $\alpha_G(g)=\psi_g$ for $g\in G$.
The group $G$ is called {\em (Pontryagin) reflexive\/}
when $\alpha_G$ is a topological isomorphism between $G$ and $\seconddual{G}$.

The following result is very useful 
for constructing examples of reflexive groups. 
It follows from
\cite[Lemma 2.3 and Theorem 6.1]{GM}
and can be found explicitly
in \cite[Theorem 2.8]{ACDT}.

\begin{fact}
\label{reflexivity:fact}
If all countable subgroups of a pseudocompact abelian group $G$ are $h$-embedded in $G$, then $G$ is reflexive.
\end{fact}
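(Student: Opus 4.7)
The plan is to verify directly that the canonical evaluation map $\alpha_G : G \to \seconddual{G}$ is a topological isomorphism. First I would set up the framework: since pseudocompact groups are precompact by Comfort--Ross, $G$ is dense in its completion $K = \widetilde{G}$, which is a compact abelian group. Because $\T$ is complete and Hausdorff, every continuous character of $G$ extends uniquely to a continuous character of $K$, yielding a canonical algebraic isomorphism $e : \dual{G} \to \dual{K}$. Injectivity of $\alpha_G$ is immediate from this identification: the characters of $K$ separate its points (Pontryagin duality for compact groups), and restriction to the dense subgroup $G$ preserves the separation. Continuity of $\alpha_G$ is a formal consequence of the definition of the compact-open topology on $\dual{G}$.

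For surjectivity, I would take an arbitrary $\phi \in \seconddual{G}$ and combine $e$ with the reflexivity of the compact group $K$ (so $\seconddual{K}\cong K$) to produce a candidate $k \in K$ satisfying $\phi(\chi) = \widetilde{\chi}(k)$ for every $\chi \in \dual{G}$, where $\widetilde{\chi} \in \dual{K}$ is the unique extension of $\chi$. The crucial task is then to pin down $k$ to lie in $G$ itself. This is where the hypothesis enters: for every countable subgroup $H \leq G$, $h$-embeddedness provides a complete realization of $\dual{H}$ via restrictions of elements of $\dual{G}$. By choosing a countable subgroup $H$ of $G$ whose closure in $K$ surrounds $k$, and exploiting the character-extension property to manufacture continuous characters of $G$ that would detect a hypothetical $k \in K \setminus G$, one forces $k \in G$ and hence $\phi = \alpha_G(k)$.

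For openness of $\alpha_G$ --- equivalently, continuity of $\alpha_G^{-1}$ --- I would show that the compact-open topology on $\seconddual{G}$, pulled back along $\alpha_G$, agrees with the original topology of $G$. This is to be accomplished by analyzing compact subsets of $\dual{G}$ through the extension map $e$ and using the rich supply of characters provided by $h$-embeddedness to recover the topology of $G$ from its dual-of-dual incarnation.

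The main obstacle is unambiguously the surjectivity step: controlling the candidate $k$ so that it actually lies in $G$ rather than in $K \setminus G$. Pseudocompactness alone does not bridge this gap (there exist pseudocompact abelian groups which are not reflexive), and the $h$-embedding hypothesis on all countable subgroups is precisely what supplies the abundance of continuous characters needed to detect elements of $G$ from the dual side. Carrying out this detection rigorously, and meshing it with the openness argument, is the technical heart of the result and aligns with the route followed in \cite{GM} and \cite{ACDT}.
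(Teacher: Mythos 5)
The paper does not actually prove this Fact: it is imported verbatim from the literature, with the text before the statement attributing it to \cite[Lemma 2.3 and Theorem 6.1]{GM} and \cite[Theorem 2.8]{ACDT}. So your proposal has to be judged on its own merits and against the argument in those sources, and as it stands it has two genuine gaps. First, your claim that ``continuity of $\alpha_G$ is a formal consequence of the definition of the compact-open topology'' is false. The map $\alpha_G$ is continuous precisely when every compact subset of $\dual{G}$ is equicontinuous, and for a precompact non-compact group $G$ every equicontinuous subset of $\dual{G}$ is \emph{finite} (an equicontinuous family of characters extends to an equicontinuous family on the compact completion $K$, which by Ascoli's theorem is relatively compact in the discrete group $\dual{K}$, hence finite). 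Thus continuity of $\alpha_G$ is \emph{equivalent} to the assertion that all compact subsets of $\dual{G}$ are finite --- a substantive statement which is exactly where \cite{GM} spends its effort and where pseudocompactness (via $G_\delta$-density of $G$ in $K$) and the $h$-embedding hypothesis both enter. Declaring this step formal skips the technical core of the theorem.

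Second, the mechanism you propose for surjectivity cannot work as described. You want to ``manufacture continuous characters of $G$ that would detect a hypothetical $k \in K \setminus G$,'' but no element of $\dual{G}$ can do this: every continuous character of $G$ extends continuously to $K$, and since $G$ is dense in $K$ the values of characters at $k$ are indistinguishable from limits of their values on $G$. Indeed, for \emph{every} $k\in K$ the assignment $\chi\mapsto\widetilde{\chi}(k)$ is a perfectly good character of the abstract group $\dual{G}$; what rules out $k\in K\setminus G$ is not a separating character but the requirement that $\phi$ be continuous for the compact-open topology of $\dual{G}$. The actual route is: the hypothesis forces all compact subsets of $G$ to be finite (this is the result quoted in Remark \ref{ACDT:remark}~(a)), so the compact-open topology on $\dual{G}$ is the topology of pointwise convergence on $G$, and the Comfort--Ross duality theorem for totally bounded group topologies \cite{CS} then identifies the continuous characters of $\dual{G}$ with evaluations at points of $G$ exactly. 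Your sketch of the openness step has the same defect: it gestures at ``recovering the topology of $G$'' without invoking the one property (pseudocompactness, i.e.\ $G_\delta$-density in $K$) that makes the recovery possible. The overall shape --- continuous open bijection onto $\seconddual{G}$ --- is the right one, but each of the three nontrivial verifications is either asserted to be formal when it is not, or replaced by a mechanism that fails.
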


The
following result of Galindo and Macario attests to an abundance of pseudocompact reflexive abelian groups $G$ having the property that all  countable subgroups of $G$ are $h$-embedded in $G$.

\begin{fact}
\label{fact:Galindo:Macario}
Let $G$ be an infinite
pseudocompact abelian
group. Assume also that one of the following two conditions is satisfied:
\begin{itemize}
\item[(i)] 
$|G|\le 2^{2^\cont}$;
\item[(ii)] 
the Singular Cardinal Hypothesis SCH holds. 
\end{itemize}
Then $G$ admits a pseudocompact 
group topology $\tau$ 
such that every countable subgroup of $(G,\tau)$ is $h$-embedded in it.
In particular, $(G,\tau)$ is a reflexive group and all compact subsets of $(G,\tau)$ are finite.
\end{fact}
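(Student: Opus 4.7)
The plan is to construct the topology $\tau$ via a character-theoretic method: choose a point-separating subgroup $H$ of $\dual{G_d}$, the abstract group of all homomorphisms from $G$ (regarded as a discrete group $G_d$) to $\T$, and let $\tau$ be the initial topology on $G$ induced by $H$. Then $\tau$ is a precompact Hausdorff group topology whose continuous characters are precisely the elements of $H$, so the requirement that every countable subgroup $K\le G$ be $h$-embedded in $(G,\tau)$ translates directly into surjectivity of the restriction map $H\to\dual{K}$. By the Comfort--Ross characterisation, $\tau$ is pseudocompact if and only if $H$ is $G_\delta$-dense in a suitable compact realisation containing it. The task therefore reduces to producing $H\subseteq\dual{G_d}$ that is simultaneously $G_\delta$-dense and restriction-surjective onto $\dual{K}$ for every countable subgroup $K$ of $G$.

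Under assumption (i), I would start with the continuous dual $H_0$ of some already-given pseudocompact topology on $G$ (whose existence is part of the hypothesis) and enlarge $H_0$ by transfinite recursion. There are at most $2^{2^\cont}$ pairs $(K,\chi)$ with $K$ a countable subgroup of $G$ and $\chi\in\dual{K}$, since $|G|\le 2^{2^\cont}$ and $|\dual{K}|\le 2^\cont$. At stage $\alpha$ I would enumerate one such pair $(K_\alpha,\chi_\alpha)$ and adjoin to $H$ an extension $\tilde{\chi}_\alpha\in\dual{G_d}$ of $\chi_\alpha$; divisibility of $\T$ ensures that such an extension exists. Under assumption (ii), SCH supplies the cardinal arithmetic needed to carry out the same construction on arbitrary infinite pseudocompact abelian groups: either the counting estimates extend directly to show that at most $|G|$ extension requirements must be handled, or $G$ is first decomposed into a suitable $\Sigma$-product of subgroups of cardinality at most $2^{2^\cont}$ and case (i) is applied factorwise.

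The main obstacle is the compatibility of the two requirements on $H$: naively adjoining extensions could concentrate $H$ inside a proper closed subgroup of its compact envelope and thereby destroy $G_\delta$-density. The technical heart of the argument is therefore a lemma asserting that, for every countable $K\le G$, every $\chi\in\dual{K}$, and every non-empty $G_\delta$ subset $W$ of the compact group containing $\dual{G_d}$, the fibre over $\chi$ of the natural restriction map meets $W$; this follows from openness of that restriction map together with divisibility of $\T$. With this lemma in hand the two families of requirements can be interleaved in the transfinite recursion, yielding an $H$ that meets all countable-subgroup extension demands while remaining $G_\delta$-dense. The ``in particular'' conclusions---reflexivity and finiteness of compact subsets---are then immediate from Fact \ref{reflexivity:fact} and Corollary \ref{h-emb:weakening}, the latter applied to any infinite compact subset, which is automatically separable and pseudocompact.
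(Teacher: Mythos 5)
The statement you are proving is labelled a \emph{Fact} because the paper does not prove it at all: the authors simply cite Galindo and Macario (\cite[Corollary 5.6 and Theorem 5.8]{GM}, where the property that every countable subgroup is $h$-embedded is called property $\sharp$) and then derive the ``in particular'' clause from Fact \ref{reflexivity:fact} and Remark \ref{ACDT:remark}~(a). Your attempt to reconstruct the Galindo--Macario construction itself has genuine gaps. First, the Comfort--Ross criterion is stated the wrong way around: for the precompact topology $\tau_H$ induced by a point-separating subgroup $H\le\mathrm{Hom}(G,\T)$, pseudocompactness is equivalent to the evaluation image of $G$ being $G_\delta$-dense in the compact group $\mathrm{Hom}(H,\T)$, not to $H$ being $G_\delta$-dense in some compact realisation. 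This is not cosmetic: as you enlarge $H$ the completion $\mathrm{Hom}(H,\T)$ grows, and the whole difficulty is to keep $G$ $G_\delta$-dense in this moving target. (Taking $H=\mathrm{Hom}(G,\T)$ yields the Bohr topology, which is never pseudocompact on an infinite group by \cite[Corollary 6.4]{DS-Kronecker}, so adjoining extensions really can destroy pseudocompactness.) Your proposed ``technical heart'' lemma does not engage with this: meeting every nonempty $G_\delta$-set of an ambient compact group by a fibre of a restriction map is not the condition that has to be preserved along the recursion, and ``openness of the restriction map together with divisibility of $\T$'' is an assertion, not an argument. Second, the SCH case is left entirely speculative (``either the counting estimates extend directly \dots\ or $G$ is first decomposed''); in \cite{GM} the role of SCH is bound up with the cardinal-arithmetic analysis of which abelian groups admit pseudocompact group topologies, and neither of your two suggested alternatives is worked out.

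Two smaller points. Your counting of the pairs $(K,\chi)$ and the use of divisibility of $\T$ to extend characters are fine, and your treatment of the ``in particular'' clause is essentially the paper's (reflexivity from Fact \ref{reflexivity:fact}; finiteness of compact subsets via Corollary \ref{h-emb:weakening}, where the paper instead quotes Remark \ref{ACDT:remark}~(a)). However, an infinite compact subset is \emph{not} automatically separable; you must first pass to the closure of a countably infinite subset of it, which \emph{is} compact, separable and pseudocompact, and only then apply Corollary \ref{h-emb:weakening}. The honest course for the main assertion is to do what the paper does and cite \cite{GM}, since a self-contained proof would amount to rewriting a substantial portion of that paper.
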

\begin{proof}
By \cite[Corollary 5.6 and Theorem 5.8]{GM}, $G$ admits a pseudocompact group topology $\tau$ such that every countable subgroup of $(G,\tau)$ is $h$-embedded in it (topological groups with this property are said to have property $\sharp$ in \cite[Definition 2.2]{GM}). The reflexivity of $(G,\tau)$ follows from
Fact \ref{reflexivity:fact}.
By Remark \ref{ACDT:remark}~(a),
all compact subsets of $(G,\tau)$ are finite.
\end{proof}

\begin{remark}
\label{remark:no:infinite:separable:pseudocompacts}
It follows from 
Corollary \ref{h-emb:weakening} and Remark \ref{triple:remark}~(2)
that 
the topology $\tau$ in Fact \ref{fact:Galindo:Macario} has the stronger property that all separable pseudocompact subsets of $(G,\tau)$ are finite.
\end{remark}

\begin{corollary}
\label{Galindo:Macario}
An infinite
pseudocompact Boolean group
$G$ admits a pseudocompact reflexive group topology which is not selectively pseudocompact in each of the following 
cases:
\begin{itemize}
\item[(i)] 
$|G|\le 2^{2^\cont}$;
\item[(ii)] 
the Singular Cardinal Hypothesis SCH holds. 
\end{itemize}
\end{corollary}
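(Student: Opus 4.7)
The plan is to derive the corollary directly by combining two results already available in the paper: Fact \ref{fact:Galindo:Macario}, which produces the required topology, and Corollary \ref{precompact:corollary}, which rules out selective pseudocompactness when every countable subgroup is $h$-embedded and the group is Boolean.

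More precisely, given an infinite pseudocompact Boolean group $G$ satisfying either (i) the cardinality bound $|G|\le 2^{2^\cont}$ or (ii) the Singular Cardinal Hypothesis, I would first invoke Fact \ref{fact:Galindo:Macario} to obtain a pseudocompact group topology $\tau$ on $G$ with the property that every countable subgroup of $(G,\tau)$ is $h$-embedded in it. By the same fact, $(G,\tau)$ is automatically (Pontryagin) reflexive, so the topology $\tau$ furnished is simultaneously pseudocompact and reflexive as the statement requires.

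It remains only to verify that $(G,\tau)$ is not selectively pseudocompact. Here I would observe that the underlying group $G$ is Boolean by hypothesis and infinite, and that the hypothesis of Corollary \ref{precompact:corollary}, namely that every countable subgroup is $h$-embedded, is exactly what Fact \ref{fact:Galindo:Macario} guarantees. Hence Corollary \ref{precompact:corollary} applies to $(G,\tau)$ and yields the failure of selective pseudocompactness.

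In this argument there is no real obstacle, since the two cited results do all the work: Fact \ref{fact:Galindo:Macario} handles existence of the topology together with reflexivity, and Corollary \ref{precompact:corollary} (itself a consequence of Corollary \ref{h:embedded:vs:Bohr} combined with Theorem \ref{main:result}) handles the negative conclusion. The only thing to double-check is that the Boolean hypothesis on $G$ carries over to $(G,\tau)$, which is immediate since being Boolean is a purely algebraic condition on the underlying group and is unaffected by the choice of topology $\tau$.
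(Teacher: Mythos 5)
Your argument is correct and coincides with the paper's own proof: apply Fact \ref{fact:Galindo:Macario} to obtain the pseudocompact reflexive topology $\tau$ with all countable subgroups $h$-embedded, then conclude via Corollary \ref{precompact:corollary}. Your extra remark that the Boolean condition is purely algebraic and hence unaffected by retopologizing is a sensible sanity check, though the paper leaves it implicit.
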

\begin{proof}
Let $G$ be an infinite pseudocompact Boolean group.
Apply Fact \ref{fact:Galindo:Macario} to find
a
 pseudocompact reflexive group topology  $\tau$ on $G$
such that every countable subgroup of $(G,\tau)$ is $h$-embedded in it.
Finally,
$(G,\tau)$ is not selectively pseudocompact by Corollary \ref{precompact:corollary}.
\end{proof}

It follows from Corollary \ref{Galindo:Macario} that, under the assumption of SCH,
{\em every\/} pseudocompact Boolean group $G$ can be equipped with 
a (reflexive) pseudocompact group topology which fails to be selectively pseudocompact. Moreover, for ``small'' groups $G$ 
the assumption of SCH is superfluous.

Note that SCH is a ``rather mild'' additional set-theoretic assumption beyond ZFC, the Zermelo-Fraenkel axioms of set theory augmented by the Axiom of Choice. Indeed, the failure of SCH implies the existence of 
a large cardinal \cite{DJ}.
Nevertheless, 
we do not know if one can omit the Singular Cardinal Hypothesis SCH completely; see Question \ref{question}.

An abelian topological group $G$ is called {\em self-dual\/} if $G$ is topologically isomorphic to its Pontryagin dual $\dual{G}$.

\begin{corollary}
\label{self-dual-corollary}
An infinite
self-dual pseudocompact Boolean group cannot be selectively pseudocompact.
\end{corollary}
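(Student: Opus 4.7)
The plan is to apply Corollary \ref{precompact:corollary} by establishing that every countable subgroup $H$ of $G$ is $h$-embedded in $G$. Fix a topological isomorphism $\phi: G \to \dual{G}$ witnessing self-duality, and write $H_d$ for $H$ equipped with the discrete topology, so that $\dual{H_d}$ is compact and, since $H$ is countable, metrizable. I would study the map
\[
r : G \to \dual{H_d}, \qquad r(g) = \phi(g)|_H.
\]
Since $\phi$ is onto, $r(G)$ coincides with the set of all restrictions $\{\chi|_H : \chi \in \dual{G}\}$, and proving $r$ to be surjective is therefore equivalent to proving that $H$ is $h$-embedded in $G$.

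The verification splits into three steps. \emph{(i) Continuity of $r$.} For each $h \in H$, the evaluation map $\psi_h : \dual{G} \to \T$ is continuous because $\{h\}$ is a compact subset of $G$ and $\dual{G}$ carries the compact-open topology; hence $g \mapsto r(g)(h) = \psi_h(\phi(g))$ is continuous, and as $\dual{H_d}$ carries the pointwise convergence topology, $r$ itself is continuous. \emph{(ii) Density of $r(G)$ in $\dual{H_d}$.} Given $\xi \in \dual{H_d}$ and finitely many $h_1,\dots,h_n \in H$, set $F = \grp{h_1,\dots,h_n}$. Since $G$ is pseudocompact and thus precompact, the continuous characters of $G$ separate its points; applying Pontryagin duality to the finite group $F$, this forces the restriction map $\dual{G} \to \dual{F}$ to be surjective, producing $\chi \in \dual{G}$ with $\chi(h_i) = \xi(h_i)$ for every $i$. \emph{(iii) From density to surjectivity.} The image $r(G)$ is the continuous image of a pseudocompact group, hence pseudocompact; as a pseudocompact subspace of the metric space $\dual{H_d}$ it is compact, and therefore closed; combined with step (ii), we conclude $r(G) = \dual{H_d}$.

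The three steps are all quite standard once the framework is in place. The point I would be most careful about is step (ii), which relies on the fact that finite subgroups of a pseudocompact abelian group are automatically $h$-embedded -- a feature that follows cleanly from maximal almost periodicity and finite-dimensional Pontryagin duality, but which must be identified correctly. Self-duality enters in two essential places (continuity of $\phi$ for step (i), and its surjectivity for identifying $r(G)$ with all restrictions of continuous characters), whereas the Boolean hypothesis is reserved for the final line, where Corollary \ref{precompact:corollary} converts $h$-embeddedness of all countable subgroups into the failure of selective pseudocompactness.
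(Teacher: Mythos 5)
Your proposal is correct, and its overall skeleton is exactly the paper's: establish that every countable subgroup of $G$ is $h$-embedded and then invoke Corollary \ref{precompact:corollary}. The difference is that the paper disposes of the $h$-embeddedness in one line by citing Tkachenko's result \cite[Theorem 2.3]{T}, whereas you reconstruct a proof of that cited fact from scratch. Your three-step argument (continuity of the restriction map $r:G\to\dual{H_d}$, density of $r(G)$ via point-separation of characters of a precompact group, and compactness of the pseudocompact metrizable image forcing $r(G)=\dual{H_d}$) is sound and is essentially the standard route to Tkachenko's theorem, so what your version buys is self-containedness at the cost of length. One small correction to your closing commentary: the Boolean hypothesis is not ``reserved for the final line'' --- you also use it in step (ii), where $F=\grp{h_1,\dots,h_n}$ is finite only because $G$ is Boolean (or at least torsion); for a general abelian $G$ you would instead argue that the point-separating subgroup $\{\chi|_F:\chi\in\dual{G}\}$ of $\dual{F_d}$ is dense, which still suffices for step (ii) but is not what you wrote.
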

\begin{proof}
Let $G$ be a self-dual pseudocompact Boolean group.
By \cite[Theorem 2.3]{T}, all countable subgroups of $G$ are $h$-embedded in $G$, so
$G$ is not selectively pseudocompact by Corollary \ref{precompact:corollary}. 
\end{proof}

We refer the reader to \cite{T} for the definition of strong self-duality. As can be guessed from the terminology, strongly self-dual abelian topological groups are self-dual. 
By \cite[Proposition 2.2]{T}, strongly self-dual abelian topological groups are reflexive.

Finally, we recall the
result of Tkachenko shown in \cite[Theorem 3.3]{T}.

\begin{fact}
\label{Tkachenko:Example}
Let $\kappa$ be an infinite cardinal with $\kappa^\omega = \kappa$. Then
there exists a pseudocompact strongly self-dual Boolean group $G$ satisfying $|G| = w(G) = \kappa.$ 
\end{fact}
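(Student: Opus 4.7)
The plan is to realize $G$ as a suitably chosen dense subgroup of the compact Boolean group $K = \F_2^\kappa$. Since any dense subgroup $G \le K$ with $|G|=\kappa$ automatically satisfies $w(G)=w(K)=\kappa$, and since $\dual{K}=\F_2^{(\kappa)}$ is the discrete direct sum of $\kappa$ copies of $\F_2$, the task reduces to producing $G$ that is (a) dense in $K$, (b) pseudocompact of cardinality $\kappa$, and (c) carries a symmetric non-degenerate continuous pairing $\langle\cdot,\cdot\rangle\colon G\times G\to\T$ for which the induced map $\phi\colon G\to\dual{G}$, $\phi(g)=\langle g,\cdot\rangle$, is a topological isomorphism. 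Strong self-duality, and then reflexivity via \cite[Proposition 2.2]{T}, will follow.

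I would fix the pairing data first by partitioning the index set $\kappa$ into pairs (using $\kappa=\kappa+\kappa$) and using that pairing to induce a symmetric bilinear form on a suitable subgroup of $K$ valued in the order-two subgroup $\{0,1/2\}\subseteq\T$. The group $G$ is then built by a transfinite recursion of length $\kappa$, invoking the hypothesis $\kappa^\omega=\kappa$ to enumerate in advance every countable object requiring attention: (i) each countable sequence of non-empty basic open subsets of $K$ (pseudocompactness tasks, handled by inserting an element of $G$ realising an accumulation point of a selection), and (ii) each element of $K^{*}=\F_2^{(\kappa)}$ that must be realised as $\phi(g)$ for some $g\in G$ (self-duality tasks). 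At stage $\alpha<\kappa$, I add countably many elements to $G_\alpha$ to discharge the task coded by $\alpha$, while maintaining a growing independent family witnessing density and preserving the isotropy condition that lets the pairing extend consistently.

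Strong self-duality of $G=\bigcup_{\alpha<\kappa}G_\alpha$ will then follow: $\phi$ is continuous and injective by design, surjective by the bookkeeping for task (ii), and open onto its image because the compact-open topology on $\dual{G}$ (controlled by pseudocompactness of $G$) is matched with the subspace topology of $G$ in $K$ through the recursion. The cardinal equations $|G|=w(G)=\kappa$ are automatic from the construction and from density of $G$ in $K$.

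The principal obstacle is the tension between tasks (i) and (ii): witnessing accumulation points forces $G$ to absorb many elements of $K$ dictated by sequences of basic open sets, and each such element must be chosen so as neither to break the symmetry under the pairing nor to produce a character lying outside the emerging image of $\phi$. Tkachenko's argument in \cite[Theorem 3.3]{T} resolves this with an HFD-style construction on a carefully chosen independent family of size $\kappa$ inside $K$, organised so that each new element can be selected within a coset of a finite-codimension subgroup compatible with both classes of demands; the hypothesis $\kappa^\omega=\kappa$ is precisely what allows the $\kappa$-many countable tasks to be enumerated and discharged within $\kappa$ stages.
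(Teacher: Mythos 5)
The paper does not prove this statement at all: it is stated as a \emph{Fact} and attributed verbatim to Tkachenko \cite[Theorem 3.3]{T}, so there is no internal proof to compare yours against. Judged on its own terms, your proposal is not yet a proof but a plan. The outline is directionally sensible (a dense subgroup of $\F_2^\kappa$ built by transfinite recursion of length $\kappa$, with $\kappa^\omega=\kappa$ used to enumerate the countably-indexed pseudocompactness tasks and the $\kappa$ characters to be realised), but the one step that carries all the mathematical content --- showing that at each stage you can choose witnesses for accumulation points of selections \emph{and} simultaneously keep the pairing symmetric, non-degenerate, and inducing a \emph{topological} isomorphism $G\cong\dual{G}$ --- is exactly the step you do not carry out. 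You name it as ``the principal obstacle'' and then resolve it by appealing to ``Tkachenko's argument in \cite[Theorem 3.3]{T}'', i.e.\ to the very theorem being proved. That is a genuine gap, not a stylistic omission: without an explicit argument that the coset conditions imposed by the pairing are always compatible with the density/accumulation requirements (and that $\phi$ is open, which you assert but do not derive), the construction could in principle get stuck at some stage.

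Two further points deserve attention. First, the statement asserts \emph{strong} self-duality; the paper deliberately declines to define this notion and refers to \cite{T}, and your argument only addresses ordinary self-duality via a pairing, so the ``strong'' part is unaccounted for. Second, your claim that openness of $\phi$ is ``controlled by pseudocompactness'' glosses over the fact that for a precompact group the compact-open topology on $\dual{G}$ reduces to pointwise convergence only once one knows compact subsets of $G$ are finite, which is itself a consequence of the $h$-embedding property that the construction must secure. If your intent is merely to cite Tkachenko's result, as the paper does, that is legitimate; but then the proposal should be presented as a citation, not as a proof sketch.
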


\begin{corollary}
For every infinite cardinal $\kappa$ satisfying $\kappa^\omega = \kappa$, there exists a pseudocompact non-selectively pseudocompact strongly self-dual Boolean group $G$ such that $|G| = w(G) = \kappa.$
\end{corollary}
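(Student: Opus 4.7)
The plan is to combine the two immediately preceding results. Given an infinite cardinal $\kappa$ with $\kappa^\omega=\kappa$, I would first invoke Fact \ref{Tkachenko:Example} to produce a pseudocompact strongly self-dual Boolean group $G$ with $|G|=w(G)=\kappa$. This already secures cardinality, weight, pseudocompactness, the Boolean property, and strong self-duality; only the failure of selective pseudocompactness remains to be checked.

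Next I would appeal to the remark in the paper just before Fact \ref{Tkachenko:Example} that strongly self-dual abelian topological groups are self-dual. Hence the group $G$ obtained from Tkachenko's construction is in particular self-dual, pseudocompact and Boolean, and since $\kappa$ is infinite, so is $G$. Thus $G$ satisfies the hypotheses of Corollary \ref{self-dual-corollary}, which yields at once that $G$ is not selectively pseudocompact.

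There is essentially no obstacle here: the statement is a direct concatenation of Fact \ref{Tkachenko:Example} with Corollary \ref{self-dual-corollary}, and the only conceptual point is the (standard, and stated) implication ``strongly self-dual $\Rightarrow$ self-dual''. The real content has already been packaged into Corollary \ref{self-dual-corollary}, which in turn traces back through \cite[Theorem 2.3]{T} (ensuring that all countable subgroups of a self-dual pseudocompact Boolean group are $h$-embedded) to Corollary \ref{precompact:corollary} and ultimately to Theorem \ref{main:result}. So the proof can be written in two or three lines.
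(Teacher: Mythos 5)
Your proposal is correct and follows exactly the same route as the paper: apply Fact \ref{Tkachenko:Example} to get the group, note that strong self-duality implies self-duality, and conclude via Corollary \ref{self-dual-corollary}. Nothing further is needed.
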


\begin{proof}
Let $\kappa$ be an infinite cardinal such that $\kappa^\omega = \kappa$. Apply Fact \ref{Tkachenko:Example} to find 
a pseudocompact strongly self-dual Boolean group $G$ satisfying $|G| = w(G) = \kappa.$
Then $G$ is self-dual,
so $G$ cannot be selectively pseudocompact by Corollary \ref{self-dual-corollary}.
\end{proof}

\begin{corollary}
\label{self:dual:corollary}
A strongly self-dual pseudocompact Boolean group need not be selectively pseudocompact.
\end{corollary}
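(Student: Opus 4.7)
The plan is to deduce this corollary as a direct one-line consequence of the machinery already assembled. The strategy is simply to exhibit a single witness by invoking Fact~\ref{Tkachenko:Example}. Fix any infinite cardinal $\kappa$ with $\kappa^\omega=\kappa$, for instance $\kappa=\cont$, and let $G$ be the pseudocompact strongly self-dual Boolean group of cardinality $\kappa$ produced by Fact~\ref{Tkachenko:Example}.

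Since strong self-duality implies self-duality, as recorded in the paragraph preceding Fact~\ref{Tkachenko:Example}, the group $G$ is in particular an infinite self-dual pseudocompact Boolean group. Corollary~\ref{self-dual-corollary} then immediately yields that $G$ is not selectively pseudocompact, which gives exactly the failure asserted by the statement.

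There is essentially no obstacle to overcome at this stage: all the real work has already been done upstream. The non-selective pseudocompactness of self-dual pseudocompact Boolean groups was obtained in Corollary~\ref{self-dual-corollary} by combining Tkachenko's \cite[Theorem 2.3]{T} (forcing all countable subgroups of such a $G$ to be $h$-embedded) with Corollary~\ref{precompact:corollary}, which in turn rests on the main Theorem~\ref{main:result}. The present corollary is therefore a one-step specialization of Corollary~\ref{self-dual-corollary} whose only additional content is the reminder, via Fact~\ref{Tkachenko:Example}, that strongly self-dual pseudocompact Boolean groups exist in the first place; the proof consists of nothing more than concatenating these two references.
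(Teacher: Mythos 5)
Your proof is correct and follows exactly the paper's (implicit) route: instantiate Fact~\ref{Tkachenko:Example} with some $\kappa$ satisfying $\kappa^\omega=\kappa$ (your choice $\kappa=\cont$ works since $\cont^\omega=\cont$) and then apply Corollary~\ref{self-dual-corollary} to the resulting strongly self-dual group. This is precisely how the paper derives the statement from the corollary immediately preceding it.
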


\section{Open questions}

It follows from Fact \ref{fact:Galindo:Macario}~(ii) and Remark 
\ref{remark:no:infinite:separable:pseudocompacts}
that, under the assumption of SCH, every infinite pseudocompact abelian group $G$  
admits a pseudocompact group topology $\tau$ such that 
all separable pseudocompact subsets of $(G,\tau)$ are finite.
It is not clear whether this result can be proved without any additional set-theoretic assumptions beyond ZFC.

\begin{question}
Is it true in ZFC that every infinite pseudocompact abelian group $G$ admits a pseudocompact group topology $\tau$ such that 
all separable pseudocompact subsets of $(G,\tau)$ are finite? 
\end{question}
It follows from Fact \ref{fact:Galindo:Macario}~(i) that the answer to this question is positive when $|G|\le 2^{2^\cont}$.

\begin{question}
\label{question}
Is it true in ZFC that every pseudocompact Boolean group admits
a pseudocompact non-selectively pseudocompact group topology?
Can this topology be made also reflexive? 
\end{question}

As was noted after Corollary \ref{Galindo:Macario}, the answer to this question is positive under SCH.

It is unclear whether Corollary \ref{Galindo:Macario} holds beyond the class of Boolean groups.

\begin{question}
Does every pseudocompact abelian group admit a pseudocompact group topology which is not selectively pseudocompact?
Can this topology be made also reflexive? 
\end{question}


\begin{thebibliography}{999}

\bibitem{ACDT}
S. Ardanza-Trevijano,
M. J. Chasco,
X. Dom\'{\i}nguez,
M. Tkachenko,
Precompact noncompact reflexive Abelian groups,
Forum Math. 24 (2012)
289--302.
\bibitem{Arh2}
A.V. Arhangel’skii, Topological Function Spaces, Math. Appl. (Soviet Ser.), vol. 78, Kluwer, Dordrecht, 1992.
\bibitem{CR}
W.W. Comfort, K.A. Ross, Pseudocompactness and uniform continuity in topological groups, Pacific J. Math. 16 (3) (1966) 483--496.
\bibitem{CS}
W. W. Comfort, V. Saks, Countably compact groups and finest totally
bounded topologies, Pacific J. Math. 49 (1973) 33--44.
\bibitem{DJ} K. I. Devlin, R. B. Jensen, Marginalia to a theorem of Silver, ISILC Logic Conference, Springer Lectures Notes 499 (1975) 115--142.
\bibitem{DS-Kronecker}
D. Dikranjan, D. Shakhmatov,
A Kronecker-Weyl theorem for subsets of abelian groups, 
Adv. Math.
226 (2011) 4776--4795.
\bibitem{DS}
A. Dorantes-Aldama, D. Shakhmatov, Selective sequential pseudocompactness,
Topology Appl. 222 (2017) 53--69.
\bibitem{GM}
J. Galindo and S. Macario, Pseudocompact group topologies with no infinite compact subsets, J. Pure Appl. Algebra 215 (2011) 655-663.
\bibitem{GF-OC}
S. Garc\'{\i}a-Ferreira, Y. F. Ortiz-Castillo, Strong pseudocompact properties, Comment. Math. Univ. Carol. 55 (2014) 101--109.
\bibitem{GF-T}
S. Garc\'{\i}a-Ferreira, A. H. Tomita, A pseudocompact group which is not strongly pseudocompact, Topology Appl. 192 (2015) 138--144.
\bibitem{Glicksberg}
I. Glicksberg, Uniform boundedness for groups, Canad. J. Math. 14 (1962) 269--276.
\bibitem{H-vM}
K. P. Hart, J. van Mill, Discrete sets and the maximal totally bounded group topology, Journal of Pure and Applied Algebra 70 (1991) 73--80.
\bibitem{HM}
S. Hern\'{a}ndez, S. Macario, Dual properties in totally bounded
Abelian groups, Arch. Math. 80 (2003) 271--283.
\bibitem{SY}
D.~Shakhmatov, V. H. Ya\~{n}ez, Selectively pseudocompact groups without infinite separable pseudocompact subsets, Axioms 7(4), 86 (2018) 23 pp., {\tt doi:10.3390/axioms7040086}.
\bibitem{T88}
M. G.~Tka\v{c}enko, Compactness type properties in topological groups, Czechoslovak Math. J. 38 (113(2)) (1988) 324--341.
\bibitem{T}
M. Tkachenko, Self-duality in the class of precompact groups, Topology Appl. 156 (2009) 2158--2165.
\end{thebibliography}
\end{document}